\DeclareMathOperator{\M}{M}
\title{Algebraic Study of Discrete Imsetal Models }
\author{Amira Alkeswani}
\date{}
\newtheorem{theorem}{Theorem}[section]
\newtheorem{corollary}{Corollary}[theorem]
\newtheorem{proposition}[theorem]{Proposition}
\newtheorem{conjecture}[theorem]{Conjecture}
\theoremstyle{plain} % Heading is bold, text italic.
\theoremstyle{remark}  % Heading is italic, text is roman
 \newcommand{\R}{\mathbb{R}}
\newcommand{\A}{\mathcal{A}}
\newcommand{\Z}{\mathbb{Z}}
\newcommand{\Q}{\mathbb{Q}}
\newcommand{\indep}{\rotatebox[origin=c]{90}{$\models$}}
\DeclareMathAlphabet\mathcal{OMS}{cmsy}{b}{n}
\begin{document}

\maketitle

\begin{abstract}
The method of imsets, introduced by Studen\'{y}, provides a geometric and combinatorial description of conditional independence statements. Elementary conditional independence statements over a finite set of random variables are represented as column vectors of a matrix that generates a polyhedral cone. The toric ideals associated with the imsets can be used to list the conditional independence relations and identify the underlying dependencies among the random variables. In this paper, we consider a suitable discrete probability distribution over  sets of three and four binary variables, as well as a combination of binary and ternary random variables, and study the conditional independence ideals associated with these relations. Additionally, we investigate the ideals of imsetal models induced by the faces of the elementary imset cone.
\end{abstract}

\section{Introduction}
A \textit{conditional independence (CI)} statement describes the relationships among random variables in a finite set, indexed by $[n].$ It takes the form $I\indep J|K$, where $I,J$ and $K$ are disjoint subsets of $[n]$. This statement indicates the CI relationship between the variables indexed by $I$ and $J,$ given the joint probability of the variables indexed by $K$. For a fixed $n$, a CI statement is called \textit{elementary} if both $I$ and $J$ contain exactly one element; such a statement is denoted by $[i \indep j \mid K]$. Otherwise, the statement is \textit{non\_elementary.} We denote by $\mathcal{E}_n$ the set of all elementary CI statements on $n$ variables. For a given $n$, 
$
|\mathcal{E}_n| = \sigma_n = \binom{n}{2} 2^{\,n-2}.
$

Through the method of imsets, Studen\'{y} provided a  purely geometric and combinatorial description of CI statements \cite{MR3183760}. In this approach, each elementary CI statement $i\indep j |K$ can be viewed as a vector or an $\textit{imset},$ via the following linear map \cite{MR2396350}:

\vspace{0.2cm}
{\centering
  $ \displaystyle
    \begin{aligned} 
      \A :  \Z^{\sigma_n} &\mapsto \Z^{ 2^{n}}\\
    [i\indep j |K ]& \longmapsto e_{ijK}+e_K-e_{iK}-e_{jK} 
    \end{aligned}
  $ 
\par}

\vspace{0.2cm}

The set of column vectors of $\A$  generates a polyhedral cone in $\R^{ 2^{n}}$ of dimension $2^n-n-1$\cite{kashimura2012cones}. The lattice points within the cone spanned by elementary imsets with non-negative rational coefficients are called \textit{structural imsets.} A crucial type of structural imset is one that represents a non-elementary CI statement; $\textit{and}$ is a sum of two or more elementary imsets. This specific set of structural imsets is denoted by $\mathcal{S}_n.$ Representing  $[I\indep J |K] $ in $\mathcal{S}_n$ as a vector with entries of $-1$, $0$, or $1$ in $\Z^{2^{n}}$ is analogous to the process of representing elementary CI statements as vectors using the  map $\A$.

 Let $\Q[\mathcal{E}_n]$ denote  the polynomial ring over the rational numbers with generators $[i\indep j|K].$  Sturmfels and others in \cite{kashimura2012cones} have shown that the toric ideal $I_{\A_n} \in \Q[\mathcal{E}_n],$
   acts as a powerful algebraic tool for generating and analyzing conditional independence relations, effectively capturing the combinatorial structure of imsets. The process works as follows: the ideal is generated by binomials, which can be translated into implications of conditional independence.  These implications then correspond to valid algebraic equations among imsets. We call these equations 
\textit{elementary CI relations},or equivalently,\textit{toric relations.}

Let $\R[P_n],$ be the polynomial ring generated by the joint probabilities of discrete random variables indexed by $[n].$ We write $I_{[i\indep j| K]}$ for the ideal in $\R[P_n]$ associated with the CI statement corresponding to ${[i\indep j| K]}.$ When considering a finite collection of CI statements, the corresponding ideal $J$ is defined by:

\vspace{0.2cm}
{\centering
  $ \displaystyle
    \begin{aligned} 
J= I_{[i_1\indep j_1 | K_1]} +\dots +I_{[i_m\indep j_m | K_m] }.  
    \end{aligned}
  $ 
\par}
\vspace{0.2cm}

 \noindent By performing the primary decomposition of the CI ideal and studying the related independence variety, it is possible to identify certain minimal primes in the decomposition that correspond to some CI statements. This approach provides  a useful tool for constructing CI inferences\cite{drton2008lectures,MR1925796}.

 In this paper, we study collections of three and four discrete random variables, including both purely binary variables and mixtures of binary and ternary variables. Our goal is to investigate the connection between the combinatorial and geometric structure of conditional independence (CI) statements and their underlying algebraic properties. Along the way, we present several computational results and observations that we believe merit further investigation.

In particular, we explore whether a CI relation arising from $I_{\mathcal{A}_n}$ can be expressed as a relation generated by $\mathcal{S}_n$. When this is the case, we study the primary decomposition of the ideals associated with these relations and examine whether their minimal primes recover the CI ideals corresponding to the CI statements in $\mathcal{S}_n$.
 
The structure of the paper is as follows. In Section~$2$, we classify the CI relations produced by the Markov basis and the Graver basis of $I_{\mathcal{A}}$ and relate them to CI statements in $\mathcal{S}_n$. We further classify these relations according to their combinatorial structure and show in Proposition~\ref{prop:1} that each quadratic binomial gives rise to a non-elementary CI relation. In Section~$3$, we study the CI ideals associated with the CI relations computed in Section~$2$. Proposition~\ref{prop:2} shows that the CI ideals associated with elementary CI relations are isomorphic when all random variables have the same number of states. Finally, in Section~$4$, we present the results of our computations on imsetal model ideals. For $n=3$, we include all imsetal models arising from the faces of the elementary imset cone, while for $n=4$, we provide representative examples of models generated by CI statements associated with non-elementary CI relations.

\section{Conditional Independence Relations from Toric Algebra}

In this section, we classify the elementary CI relations produced by the Markov basis and the Graver basis of $I_{\A}$, and establish connections between these relations and the CI statements in 
$\mathcal{S}_n,$, particularly those that extend into non-elementary CI relations.

We use the commands [toricMarkov] and [toricGraver] in $\textit{Macaulay2}$ to compute bases associated with $\A$ and the polynomial ring $Q[\mathcal{E}_n].$  Binomial components in a Markov basis give a minimal list of CI relations, while the Graver basis $Gr_{\A_n}$ provides a larger one. Since the latter basis consists solely of primitive binomials, each generates a $\textit{unique}$ CI relation; that is, one not contained within any other relation.

 For $n>3,$ the binomial elements within a Markov basis provide a concise list of CI relations, whereas $Gr_{\A_n}$ provides a more extensive set. Since the
latter basis consists solely of primitive binomials, each generates a  \textit{distinct} CI relation. In this context, distinct means that the relation is not contained in any other.

When $n=3$, both the Markov basis and the Graver basis are equivalent. They each consist of three quadratic binomials that belong to the same symmetry class and have the form shown in Equation \eqref{eq:1}. These binomials are associated with the semigraphoid axioms and yield the following CI relation. A representative example is shown below; the remaining two follow by symmetry:

\vspace{0.2cm}
{\centering
  $ \displaystyle
    \begin{aligned}\label{eq:1}
[1 \indep 2|3 + 1 \indep 2|\emptyset] = [1 \indep 3 |2 + 1 \indep 2|\emptyset].
    \end{aligned}
  $ 
\par}
\vspace{0.2cm}
     
In the case $n=4$, the Markov basis consists of $49$ elements, which are a mix of quadratic, cubic, and quartic binomials. Among these, $24$ are quadratics and can be grouped into two symmetry classes. Examples of each class are shown below:

\begin{table}[ht]
\centering
\begin{tabular}{cc}
Class I  & $ [1 \indep 2|\emptyset \,\cdot \, 2 \indep 4|1] - [2 \indep 4 |\emptyset \,\cdot \, 1 \indep 2|4],$  \\
Class II & $ [3 \indep 4|1 \,\cdot \, 2 \indep 3|14] - [2 \indep 3|1 \,\cdot \, 3 \indep 4|12]. $
\end{tabular}
\end{table}

It also has four cubics that belong to one symmetry class, such as
 \begin{equation*} 
[2 \indep 3|1 \,\cdot \, 3 \indep 4|2 \,\cdot \, 1 \indep 3|4] -  [3 \indep 4|1 \,\cdot \, 1\indep 3|2 \,\cdot \, 2 \indep |4].
          \end{equation*}

The remaining binomial components are quartics that are partitioned into two classes. Here are representatives of each class:

 \begin{equation*} 
[1 \indep 2|4 \,\cdot \, 2 \indep 4|3 \,\cdot \, 1\indep 3|2 +3\indep 4|1] - [1 \indep 3 |4 \,\cdot \, 1 \indep 2|3 \,\cdot \, 3 \indep 4|2 \,\cdot \, 2\indep 4|1],\,\text{and}
          \end{equation*}
    \begin{equation*} 
[2\indep 4|13 \,\cdot \, 1\indep 3|4 \,\cdot \, 1\indep 4|2 \,\cdot \, 2\indep 3|\emptyset ]- [1 \indep 4 |23 \,\cdot \, 2 \indep 3|4 \,\cdot \, 2 \indep 4|1 \,\cdot \, 1\indep 3|\emptyset].
          \end{equation*}

One should notice the combinatorial structure of CI statements in these generators. Particularly, in a binomial, the random variables of CI statements of a monomial are the permutation $\gamma \in S_n$ of the random variables in the statements of the other monomial.

The set $Gr_{{\A}_4}$ consists of $3667$ generators. Not all of these elements are homogeneous; a homogeneous binomial is one whose associated vector has coordinates summing to zero. However, $2,311$ of these binomials are homogeneous, with each variable having degree one.

The linear map $\A$ is a $2^n \times \sigma _n$ matrix. Each column vector is associated with an elementary CI statement. Basic algebraic operations on these vectors do not always yield vectors representing CI statements. However, every non-elementary CI statement in the set $\mathcal{S}_n$ is a sum of at least two elementary CI statements. One way to write the imset $s$ in  $\mathcal{S}_n$ as a sum of elementary imsets is to solve the linear system $\A \cdot x=s.$ The heavy combinatorial structure of imsets allows several representations for each statement in $\mathcal{S}_n.$ A study of the characteristics of non-elementary CI statements can be found in \cite{kashimura2011properties}.

  We use combinatorics to list elements of $\mathcal{S}_n.$ In the case $n=3,$, the set $\mathcal{S}_3$ has three saturated statements belonging to one symmetry class of the form $ij\indep k | \emptyset.$  However,  $\mathcal{S}_4$ consists of $31$ statements that can be partitioned into four distinct types. The list below gives the form and the counts of CI statements for each type.
\begin{table}[h!]
\centering
\begin{tabular}{ccc}
\textbf{Type}     & \textbf{Form}                     & {[}\textbf{Count}{]} \\ \hline
Type I   & $ij \indep kl |\emptyset$ & {[}3{]}     \\
Type II  & $ijk\indep l|\emptyset$   & {[}4{]}     \\
Type III & $ij\indep k |\emptyset$   & {[}12{]}    \\
Type IV  & $ij\indep k |l$     & {[}12{]}   
\end{tabular}
\end{table}

Since each CI statement in $\mathcal{S}_n$ is a sum of at least two elementary imsets, homogeneous binomials in $I_{{\A}_n}$ with variables of degree one are candidates for representing such imsets.  We use the map $\A$ to verify whether a given binomial can be extended to a non-elementary CI relation.

 \begin{proposition} \label{prop:1}
 Each quadratic binomial of $Gr_{\A_n}$ defines a non-elementary CI relation.
 \end{proposition}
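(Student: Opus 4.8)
The plan is to translate the statement into a convenient polynomial model and then reduce everything to a cancellation count among the four imsets involved.

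First I would identify an imset $w\in\Z^{2^n}$ with the squarefree polynomial $\sum_{S\subseteq[n]} w_S\, t^S$ in $R:=\Q[t_1,\dots,t_n]/(t_v^2-t_v:v\in[n])$, where $t^S=\prod_{v\in S}t_v$. Under this identification the generator attached to an elementary statement becomes the product $\A([i\indep j\mid K])\leftrightarrow t^K(t_i-1)(t_j-1)$, and more generally the imset of an arbitrary CI statement $I\indep J\mid K$ (with $I,J,K$ pairwise disjoint) becomes $t^K(t^I-1)(t^J-1)$; such a statement is non-elementary exactly when $|I|+|J|\ge 3$. A quadratic element of $Gr_{\A_n}$ is a binomial $x_ax_b-x_cx_d$ with $a=[i_1\indep j_1\mid K_1],\dots,d=[i_4\indep j_4\mid K_4]$, and its membership in $I_{\A_n}$ is equivalent to the equality $\A(a)+\A(b)=\A(c)+\A(d)=:f$ in $R$. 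The goal is then to show $f=t^K(t^I-1)(t^J-1)$ for disjoint $K,I,J$ with $|I|+|J|\ge 3$; equivalently, that $f$ is the imset of a single non-elementary statement of $\mathcal{S}_n$, so that the binomial encodes the corresponding CI relation.

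Next I would record two reductions. Primitivity forces the four statements to be distinct with $\{a,b\}\cap\{c,d\}=\emptyset$: a shared variable would let us cancel and produce a degree-one element of $I_{\A_n}$, which is impossible since $\A$ is injective on elementary statements. Then I view each $\A(e)$ as a diamond carrying $+1$ at its top $T_e=\{i_e,j_e\}\cup K_e$ and bottom $K_e$ and $-1$ at the two intermediate sets $K_e\cup\{i_e\},K_e\cup\{j_e\}$, and I let $c$ be the number of lattice points at which the $a$- and $b$-diamonds carry opposite signs, so that the positive part of $f$ equals $4-c$. A cardinality count is decisive here: the two $+1$'s of a diamond sit at levels $|K|$ and $|K|+2$ while its two $-1$'s both sit at level $|K|+1$, so two opposite-sign coincidences of the same orientation would force one cardinality to equal two different values. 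Hence $c\in\{0,1,2\}$, and $c=2$ must consist of one coincidence of each orientation.

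When $c=2$ the surviving four points form a diamond, and the computation is exactly the semigraphoid identity $t^K(t_i-1)\bigl[(t_j-1)+t_j(t_l-1)\bigr]=t^K(t_i-1)(t_jt_l-1)$ together with its mirror image; this exhibits $f=t^K(t^I-1)(t^J-1)$ with $|I|+|J|=3$, a non-elementary statement, whose only two decompositions into elementary products are precisely the two monomials of the binomial. What remains, and what I expect to be the main obstacle, is to rule out $c\in\{0,1\}$, where the positive part of $f$ is $3$ or $4$ and $f$ cannot be semi-elementary. Here I would argue that $f$ admits a \emph{unique} expression as a sum of two elementary imsets, contradicting $\{a,b\}\ne\{c,d\}$. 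The natural attack is to peel off the top: the maximal-cardinality positive monomial $T^*$ of $f$ must be the top of a diamond in any decomposition, and subtracting that diamond should leave a single elementary imset. The subtle point is that a lower diamond's top may coincide with a higher diamond's intermediate set, which is exactly what happens when $c=1$, so one cannot simply read the two middles off $f$; instead one must verify that among the candidate tops only one leaves a genuine single diamond as remainder. Carrying this reconstruction through rigorously and uniformly in $n$ is the crux; once it is in place, $c=2$ is forced and the proposition follows.
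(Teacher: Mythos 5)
Your proposal contains a genuine gap, and you have flagged it yourself: the entire argument hinges on ruling out the cases $c\in\{0,1\}$, i.e., on showing that every quadratic binomial of $Gr_{\A_n}$ is forced to have the semigraphoid form $[i\indep j\mid K\cup l]\cdot[i\indep l\mid K]-[i\indep j\mid K]\cdot[i\indep l\mid K\cup j]$. You describe a plan of attack (peeling off the maximal-cardinality positive monomial and checking uniqueness of the decomposition into two diamonds) but do not carry it out, and you acknowledge that this reconstruction is ``the crux.'' As written, the proposition is therefore not proven. The paper sidesteps exactly this step by taking the classification as known from the literature on semigraphoids (its proof opens with ``Each quadratic binomial is associated with a semigraphoid axiom,'' citing the reference for the $n=5$ Markov basis computation), and then the whole proof reduces to the one computation you do correctly in your $c=2$ case: in your polynomial encoding, $t^K(t_i-1)\bigl[(t_j-1)+t_j(t_l-1)\bigr]=t^K(t_i-1)(t_jt_l-1)$, which is precisely the paper's verification that both monomials map under $\A$ to $e_{ijlK}+e_K-e_{iK}-e_{jlK}$, the imset of the non-elementary statement $[i\indep jl\mid K]$. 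So you have reproduced the half of the argument the paper actually writes down, and you are missing the half it obtains by citation.

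Two further remarks on the incomplete part. First, your count ``the positive part of $f$ equals $4-c$'' silently assumes that the only coincidences between the supports of the two diamonds are of opposite sign; a same-sign coincidence would produce an entry $\pm 2$ in $f$, and while such an $f$ cannot be a semi-elementary imset, your argument is trying to characterize \emph{all} quadratic Graver elements, so this case must be explicitly handled (e.g., by noting that $\A(c)+\A(d)$ must realize the same $\pm2$ entry and deriving a contradiction with primitivity, or with the level structure). Second, the cardinality argument giving $c\le 2$ is correct and is a nice observation, but if you want a self-contained proof you could more economically either (a) invoke the known fact that the quadratic part of the semigroup relations of the elementary imset cone is generated by the semigraphoid relations, as the paper implicitly does, or (b) complete your reconstruction argument by observing that in any expression $f=\A(c)+\A(d)$ the unique maximal set in the positive support determines $T_c=\{i_c,j_c\}\cup K_c$ and the unique minimal set determines $K_d$, which pins down the possible pairs $\{c,d\}$ far enough to force $c=2$. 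Until one of these is done, the proof is incomplete.
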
 
\begin{proof}
Each quadratic binomial is associated with a semigraphoid axiom that yields the following equation \cite{MR2396350}
$$[i\indep j|K\cup l+ i\indep l|K] =[i\indep j|K + i\indep l|K\cup j],$$ 
The image of  CI statements in the above equation  under the map $\A$ is:
\begin{footnotesize}
\begin{align*}
 [( e_{ijlK}+e_{lK}-e_{ilK}-e_{jlK}) + (e_{ilK}+e_{K}-e_{iK}-e_{lK})] &=  [(e_{ijK}+e_K-e_{iK}-e_{jK}) + (e_{ijlK}+e_{jK}-e_{ijK}-e_{jlK})]\\
     [e_{ijlK} + e_{K} - e_{iK} -  e_{jlK}]& = [e_{ijlK} + e_{K} - e_{iK} -  e_{jlK}]\\
     [i\indep jl|K] &= [i\indep jl|K].\\
 \end{align*}
  \end{footnotesize}
 Thus
$$[i\indep j|K\cup l+ i\indep l|K] =[i\indep j|K + i\indep l|K\cup j]= [ i\indep jl |K].$$ 
\end{proof}
Using the software \textit{Macaulay2} and the above proposition, we derive the following result.
 \begin{corollary} \label{corr:1}
 For $n=3,$ the set $\mathcal{E}_3$ is in bijection with the set of semigraphoid axioms.
 \end{corollary}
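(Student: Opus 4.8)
The plan is to treat this as a finite enumeration problem, since for $n=3$ both sides are small and completely explicit. First I would record the left-hand side: by the cardinality formula $|\mathcal{E}_3| = \binom{3}{2}2^{3-2} = 6$, so the six elementary statements are $[1\indep2|\emptyset]$, $[1\indep2|3]$, $[1\indep3|\emptyset]$, $[1\indep3|2]$, $[2\indep3|\emptyset]$, $[2\indep3|1]$. For the right-hand side I would use Proposition~\ref{prop:1}: every semigraphoid axiom for $n=3$ is an instance of the template $[i\indep j|K\cup l]+[i\indep l|K]=[i\indep j|K]+[i\indep l|K\cup j]=[i\indep jl|K]$, and because only three variables are available the conditioning set $K$ is forced to be empty. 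Thus each axiom collapses to $[i\indep jl|\emptyset]$, one of the saturated statements of $\mathcal{S}_3$ identified earlier.

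The key steps are then: (i) enumerate the semigraphoid axioms for $n=3$ by recording, for each apex $i\in\{1,2,3\}$, the two decompositions of $[i\indep jl|\emptyset]$ (one moving $j$ and one moving $l$ into the conditioning set), which yields $3\cdot 2=6$ instances; (ii) match this enumeration against the list of six elementary statements; and (iii) invoke the fact, verifiable in \emph{Macaulay2}, that for $n=3$ the Markov and Graver bases coincide and consist exactly of the three quadratic binomials attached to these axioms, so that the enumeration is complete and no further relations intervene. Since both sets have six elements and each is given by an explicit finite list, exhibiting the correspondence is then a matter of pairing the lists.

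The main obstacle is bookkeeping around the symmetry of elementary statements rather than any deep algebra. Because $[i\indep j|K]=[j\indep i|K]$, a naive assignment sending an axiom to one of its four constituent elementary statements is two-to-one rather than bijective, so the delicate point is to fix the counting convention for a ``semigraphoid axiom'' — distinguishing the two contraction decompositions of each $[i\indep jl|\emptyset]$ — so that exactly six axioms are counted and the map to $\mathcal{E}_3$ becomes one-to-one. I would therefore make the decomposition data part of the definition of an axiom instance, confirm via the map $\A$ (which is injective on the relevant vectors) that distinct instances and distinct statements are genuinely distinct, and then read off the bijection. Verifying that the total is exactly $6$ and not $3$ is the crux, and it is precisely here that Proposition~\ref{prop:1} and the \emph{Macaulay2} computation are used to pin down the enumeration.
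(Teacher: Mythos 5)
Your route diverges from the paper's, and the divergence matters. The paper's proof never touches the six-element set $\mathcal{E}_3$ directly: it constructs a map $\Phi\colon Gr_{\A_3}\to\mathcal{S}_3$ between two \emph{three}-element sets --- the three quadratic binomials (one per choice of pivot $i$, since exchanging $j$ and $l$ merely swaps the two sides of the same binomial) and the three saturated statements $[i\indep jl\mid\emptyset]$ --- obtaining one direction from Proposition~\ref{prop:1} and the other by solving $\A\cdot\mathbf{x}=\mathbf{s}$ for each $\mathbf{s}\in\mathcal{S}_3$ and observing that each solution singles out a distinct binomial. So the paper counts three semigraphoid axioms, not six, and its argument is a $3\leftrightarrow 3$ correspondence; you read the statement literally and set out to build a $6\leftrightarrow 6$ correspondence instead. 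You are right that the literal statement (with $|\mathcal{E}_3|=6$) sits uneasily with a three-element axiom set, but your repair introduces a new problem rather than solving the old one.

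The genuine gap is at the step you yourself call the crux: after inflating the count to six by distinguishing the two decompositions of each $[i\indep jl\mid\emptyset]$, you must send each decomposition to a single elementary statement, and you never say which. Each decomposition is an unordered pair of elementary statements, and each of the six elementary statements occurs in exactly two decompositions (for instance $[1\indep 2\mid\emptyset]$ occurs in $[1\indep3\mid2]+[1\indep2\mid\emptyset]$ and in $[2\indep3\mid1]+[1\indep2\mid\emptyset]$), so the incidence structure is a $2$-regular bipartite graph --- in fact a single $12$-cycle. A perfect matching exists, but there are exactly two of them, neither is canonical, and the obvious rules (``send each decomposition to its conditional member'' or ``to its marginal member'') both fail to be injective. ``Pairing the lists'' therefore hides the entire content of the claim. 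Neither Proposition~\ref{prop:1} nor the \emph{Macaulay2} computation supplies this assignment: they establish only that there are three binomials and three saturated statements and that each binomial yields a non-elementary relation. To close the gap you would have to either exhibit one of the two matchings explicitly and justify it as the intended correspondence, or --- more in the spirit of the paper --- restate the corollary as a bijection between $Gr_{\A_3}$ and $\mathcal{S}_3$ and run the paper's argument.
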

 
\begin{proof}
  The computation showed that the Markov basis of $ I_{\mathcal{A}_3} $ is identical to
  $Gr_{\mathcal{A}_3};$ both consist of three quadratic binomials of the form given in Equation \eqref{eq:1}. The set $ \mathcal{S}_3$ contains three statements of the form $i \indep j k | \emptyset$.
  
 Consider the map $ \Phi $ for $ n=3 $, defined by
  \begin{align*}
    \Phi : & Gr_{\A_n} \to \mathcal{S}_n.
  \end{align*}
  The map $\Phi $ is injective by Proposition~\ref{prop:1}. To show that \( \Phi^{-1} \) is injective, let $ \textbf{s}_i $ be the vector that represents a CI statement in $ \mathcal{S}_3 $. For each $i \in [3]$, we solved the system $ \A \cdot \textbf{x}_i = \textbf{s}_i $. Each resulting $ \textbf{x}_i $ corresponded  to a unique initial monomial in $ Gr_{A_3} $.
\end{proof}

The above discussion implies that the only non-elementary relation for $n=3$ takes the form
 \begin{equation} \label{eq:51}
 [i \indep j|k + i \indep j|\emptyset]= [i \indep k |j+ i \indep j|\emptyset]=[i \indep jk|\emptyset].
          \end{equation}
          
Corollary $\ref{corr:1}$ does not hold for $n=4.$ The map $\Phi$ is injective since $|\mathcal{S}_n|>24.$ The CI relations produced by the Class I and Class II quadratic binomials  in $Gr_{\A_4}$ yield statements of Type III and Type IV, respectively, in $\mathcal{S}_n$. Two representative examples are given below:
 
 \begin{equation} \label{eq:6}
 [1 \indep 2|\emptyset + 2 \indep 4|1] = [2 \indep 4 |\emptyset+ 1 \indep 2|4] = [1 4\indep 2|\emptyset].
          \end{equation}
          \begin{equation} \label{eq:7}
[1 \indep 2|3 + 2 \indep 4|13] = [2 \indep 4 |3+ 1 \indep 2|34] = [1 4\indep 2|3].
          \end{equation}
We observed that the type of CI statement in \(\mathcal{S}_n\) associated with each quadratic binomial 
is determined by the permutation \(\gamma \in S_n\) that rearranges the random variables of 
the CI statements within its monomial. It also depends on the variables being conditioned on. 
For instance, in Equation~(3), the random variables indexed by \(1\) and \(4\) in the statement 
\(14 \perp 2 \mid 3\) arise because the transposition \((1, 4)\) is applied to the variables in each 
statement on one side of the equation; that is, within one monomial, to produce those on 
the other side. Moreover, the statement \(14 \perp 2 \mid 3\) conditions only on \(3\), since it is 
the index shared across all the elementary CI statements in that relation. This observation 
provides a more efficient way to construct non-elementary relations from the quadratic 
components of \(Gr_{A_n}\) than by directly using the linear map  \(\A\).

The cubic and quartic binomials produced by the Markov basis of $I_{\A_4}$ do not yield non-elementary CI relations.
We attempted to solve for the saturated CI statements in $\mathcal{S}_4$ of Type I and Type II using \textit{Macaulay2}, but obtained vectors with supports other than $\mp 1.$ Therefore, we used the linear map $\A$ to manually compute representations of each of these statements as sums of elementary CI statements. The Appendix provides a list of non-elementary CI relations corresponding to each type. Two examples are shown below:
\begin{equation}  \label{eq:2}
 [2 \indep 4|13 + 1 \indep 4|3\emptyset+3 \indep 4|\emptyset] = [3 \indep 4|12 + 1 \indep 4|2\emptyset+2 \indep 4|\emptyset] =[123\indep 4|\emptyset],\,\text{and}
\end{equation}
 \begin{equation} \label{eq:3}
 [1 \indep 3|24 + 1 \indep 2|4\emptyset+2 \indep 4|3+2\indep3|\emptyset] = [1 \indep 4|23 + 1 \indep 2|3\emptyset+2 \indep 3|4+2\indep4|\emptyset] = [12\indep 34|\emptyset].
\end{equation}

For $n=5,$ the Appendix of \cite{MR2396350} includes the $120$ quadratic binomials produced by a Markov basis of $I_{\A_5}$. According to Proposition \ref{prop:1}  each of these binomials defines  a CI statement in $\mathcal{S}_5.$ The following is an example: 
$$ [3\indep 5|12 + 3\indep 4|125] =[ 3\indep 4|12 + 3\indep 5|124] = [45\indep 3|12].$$ 

In the following section, we study the CI ideals in the polynomial ring $\R[P_n]$ associated with the CI relations induced by $I_{\A}.$ 

\section{ The CI Ideals of Toric Relations } \label{s:3}
The core of this study is to connect the geometric and combinatorial structure of the CI statements described by the method of imsets with their algebraic aspects. Studen\'{y}'s framework suggests that the geometric description of CI statements can guide the computation of the primary decomposition of CI ideals, even in cases where computer algebra systems may not provide complete results. To interpret a collection of CI statements modeled by imsets as probabilistic CI statements, we assume that the probability distribution over $n$ belongs to the class of distributions with finite multiinformation, as discussed in \cite{MR3183760}.

Let $J_i \subset \mathbb{R}[P_n]$ be the ideal defined as the sum of the CI ideals corresponding to the CI statements that verify one side of the equation, as shown in Equations~\ref{eq:2} and~\ref{eq:3}. In this section, we characterize the ideals $J_1, J_2,$ and $J_3$ in $\mathbb{R}[P_n]$, defined as follows:
$$ \underbrace{I_{ [i_{s_1}\indep j_{s_1} | K_{s_1}]} +\dots +I_{[i_{s_q}\indep j_{s_q} | K_{s_q}]}}_{J_1} ,\,\,\underbrace{I_{[i_{t_1}\indep j_{t_1} | K_{t_1}]} + \dots + I_{[i_{t_r}\indep j_{t_r} | K_{t_r}]}}_
{J_2}\,\,\text{and}\,\, \underbrace{I_s}_{J_3}.$$
 
 We used the library [primdec.lib] in $\textit{Singular}$ and the commands [primdecSY], [minAssGTZ], 
[dim], [Equal(ideal I, ideal J)], and [degree] to compute the primary decomposition, minimal primes, degree, and dimension of these ideals.

 $$  \underbrace{I_{1 \indep 2|3} + I_{1 \indep 2|\emptyset}}_{J_1},\,\, \underbrace{I_{ 1 \indep 3 |2}+ I_{1\indep 2|\emptyset}}_{J_2},\,\,\text{and}\,\,\underbrace{I_{1 \indep 23|\emptyset}}_{J_3}.$$

  For these computations, we first considered all random variables to be binary, and then examined mixed cases where one variable was ternary. The summarized results are shown in Table \ref{tab:1} below.
  
\begin{table}[ht]
\centering
\begin{tabular}{c|ccc|ccc|ccc|ccc|}
\cline{2-13}
                                     & \multicolumn{3}{c|}{\textbf{(2,2,2)}}                           & \multicolumn{3}{c|}{\textbf{(3,2,2)}}                           & \multicolumn{3}{c|}{\textbf{(2,3,2)}}                           & \multicolumn{3}{c|}{\textbf{(2,2,3)}}                           \\ \cline{2-13} 
                                     & \multicolumn{1}{c|}{$J_1$} & \multicolumn{1}{c|}{$J_2$} & $J_3$ & \multicolumn{1}{c|}{$J_1$} & \multicolumn{1}{c|}{$J_2$} & $J_3$ & \multicolumn{1}{c|}{$J_1$} & \multicolumn{1}{c|}{$J_2$} & $J_3$ & \multicolumn{1}{c|}{$J_1$} & \multicolumn{1}{c|}{$J_2$} & $J_3$ \\ \hline
\multicolumn{1}{|c|}{\textbf{dim}}            & \multicolumn{1}{c|}{9}  & \multicolumn{1}{c|}{9}  & 5  & \multicolumn{1}{c|}{7}  & \multicolumn{1}{c|}{7}  & 6  & \multicolumn{1}{c|}{7}  & \multicolumn{1}{c|}{8}  & 7  & \multicolumn{1}{c|}{8}  & \multicolumn{1}{c|}{7}  & 7  \\ \hline
\multicolumn{1}{|c|}{\textbf{degree}}         & \multicolumn{1}{c|}{2}  & \multicolumn{1}{c|}{2}  & 4  & \multicolumn{1}{c|}{6}  & \multicolumn{1}{c|}{6}  & 10 & \multicolumn{1}{c|}{6}  & \multicolumn{1}{c|}{2}  & 6  & \multicolumn{1}{c|}{2}  & \multicolumn{1}{c|}{6}  & 6  \\ \hline
\multicolumn{1}{|c|}{\#\textbf{minimal primes}} & \multicolumn{1}{c|}{3}  & \multicolumn{1}{c|}{3}  & 1  & \multicolumn{1}{c|}{3}  & \multicolumn{1}{c|}{3}  & 1  & \multicolumn{1}{c|}{3}  & \multicolumn{1}{c|}{7}  & 1  & \multicolumn{1}{c|}{7}  & \multicolumn{1}{c|}{3}  & 1  \\ \hline
\end{tabular}
\caption{\label{tab:1} Summary of the computation on the CI ideals over $n=3.$}
\end{table}

The computation produces the following observations:
\begin{enumerate}[label=(\roman*)]
    \item The ideals $J_1,J_2$ and $J_3$ are not equal.
    \item Examining the sets of minimal primes of \(J_1\) and \(J_2\) shows that their intersection 
contains a binomial ideal precisely the prime ideal \(J_3\). 
The ideal \(J_3\) corresponds to the \(2 \times 2\) minors of the associated probability 
matrix of size \(r_1 \times r_2\). 
The variety \(V(J_3)\) is a projective toric variety given by the Segre embedding of 
\(\mathbb{P}^{r_1 - 1} \times \mathbb{P}^{r_2 - 1}\) into \(\mathbb{P}^8\). 
In fact, this variety is the only subvariety of \(V(J_1)\) and \(V(J_2)\) that intersects 
the probability simplexes \(\Delta_7\) and \(\Delta_{11}\) non-trivially. 
The vanishing sets of the remaining components lie along the boundaries of these simplexes. 
Therefore, the following CI inferences hold:

 $$ [i \indep j|k \,\,\text{and}\,\, i \indep j|\emptyset] \implies [i \indep jk |\emptyset],\,\text{and} $$
    $$ [i\indep k |j \,\,\text{and}\,\, i \indep j|\emptyset] \implies  [i \indep jk |\emptyset],$$
\end{enumerate}
are verified by the primary decomposition of $J_1$ and $J_2.$
The chances for \textit{Singular} to compute the primary decomposition of a given CI ideal decrease with the increase of $n$ and the values of random variables. Although in the case $n=3,$ the commands \texttt{primdecSY}, \texttt{minAssGTZ}, did not yield results when all the variables were ternary. 
 
When \(n = 4,\) we present results only for the CI ideals corresponding to the 
$24$ quadratic binomials and for selected CI relations from the Markov basis. 
 
 Starting with the quadratic CI relations of the semigraphoid axioms, we tested the CI ideals associated with \ref{eq:2} and \ref{eq:3} when all random variables are binary and when the second variable is ternary.
 
$$  \underbrace{I_{1 \indep 2|\emptyset} + I_{2 \indep 3|1}}_{P_1}, \,\, \underbrace{ I_{ 2 \indep 4 |\emptyset}+ I_{1 \indep 3|2}}_{P_2}, \,\, \text{and} \, \underbrace{I_{1 4\indep 2|\emptyset}}_{P_3}$$
$$  \underbrace{I_{3 \indep 4|1} + I_{2 \indep 3|14}}_{Q_1},\,\,  \underbrace{ I_{2 \indep 3 |1}+ I_{2 \indep 4|13}}_{Q_2},\,\,\text{and}\,\,\underbrace{ I_{2 4\indep 3|1}}_{Q_3}$$

The results of computing the degree, dimension, and total number of minimal primes 
of the ideals \(P_i\) and \(Q_i\) are summarized in Table~\ref{tab:2}.

\begin{table}[h!] 
\centering
\begin{tabular}{c|cccccc|cccccc|}
\cline{2-13}
                                                 & \multicolumn{6}{c|}{\textbf{(2,2,2,2)}}                                                                                                                & \multicolumn{6}{c|}{\textbf{(2,3,2,2)}}                                                                                                                \\ \cline{2-13} 
                                                 & \multicolumn{1}{c|}{$P_1$} & \multicolumn{1}{c|}{$P_2$} & \multicolumn{1}{c|}{$P_3$} & \multicolumn{1}{c|}{$Q_1$} & \multicolumn{1}{c|}{$Q_2$} & $Q_3$ & \multicolumn{1}{c|}{$P_1$} & \multicolumn{1}{c|}{$P_2$} & \multicolumn{1}{c|}{$P_3$} & \multicolumn{1}{c|}{$Q_1$} & \multicolumn{1}{c|}{$Q_2$} & $Q_3$ \\ \hline
\multicolumn{1}{|c|}{\textbf{dim}}               & \multicolumn{1}{c|}{13}    & \multicolumn{1}{c|}{13}    & \multicolumn{1}{c|}{13}    & \multicolumn{1}{c|}{10}    & \multicolumn{1}{c|}{10}    & 10    & \multicolumn{1}{c|}{19}    & \multicolumn{1}{c|}{19}    & \multicolumn{1}{c|}{18}    & \multicolumn{1}{c|}{14}    & \multicolumn{1}{c|}{17}    & 12    \\ \hline
\multicolumn{1}{|c|}{\textbf{degree}}            & \multicolumn{1}{c|}{2}     & \multicolumn{1}{c|}{2}     & \multicolumn{1}{c|}{4}     & \multicolumn{1}{c|}{4}     & \multicolumn{1}{c|}{4}     & 4     & \multicolumn{1}{c|}{3}     & \multicolumn{1}{c|}{3}     & \multicolumn{1}{c|}{10}    & \multicolumn{1}{c|}{36}    & \multicolumn{1}{c|}{4}     & 100   \\ \hline
\multicolumn{1}{|c|}{\textbf{\# minimal primes}} & \multicolumn{1}{c|}{3}     & \multicolumn{1}{c|}{3}     & \multicolumn{1}{c|}{1}     & \multicolumn{1}{c|}{9}     & \multicolumn{1}{c|}{9}     & 1     & \multicolumn{1}{c|}{9}     & \multicolumn{1}{c|}{9}     & \multicolumn{1}{c|}{1}     & \multicolumn{1}{c|}{9}     & \multicolumn{1}{c|}{49}    & 1     \\ \hline
\end{tabular}

\caption{\label{tab:2}Summary of the computation on the CI ideals over $n=4.$}
\end{table}

The following statements summarize the most significant observations regarding the relationships among these CI ideals. 

\begin{enumerate}
    \item The ideals \(P_1, P_2,\) and \(P_3\) are distinct, as are the ideals \(Q_1, Q_2,\) and \(Q_3.\)

    \item When all variables are binary, each pair of ideals \((P_1, P_2)\) and \((Q_1, Q_2)\) 
    share the same degree and dimension, and decompose into the same number of minimal primes.

    \item The minimal primes of \(P_1\) and \(P_2\) include one component that 
    intersects the probability simplex. This component corresponds precisely to the ideal \(P_3.\) 
    A similar relationship holds among \(Q_1, Q_2,\) and \(Q_3.\)

    \item Attempts to compute the CI ideals associated with the cubic and quartic relations 
    provided only their dimension and degree, as the software could not complete 
    the full primary decomposition.
\end{enumerate}
 
For the remaining CI relations that each yield a CI statement in \(E_4\), 
as shown in Equations~(4) and~(5), the computations confirmed that the 
ideals \(I_{123 \perp 4 \mid \emptyset}\) and \(I_{12 \perp 34 \mid \emptyset}\) 
lie in the intersection of the minimal primes of the CI ideals 
\(P_1\) and \(P_2,\) and of the ideals \(Q_1\) and \(Q_2,\) respectively. 
In particular, we have

\[
I_{123 \perp 4 \mid \emptyset} \subset 
\underbrace{ I_{2 \perp 4 \mid 13} + I_{1 \perp 4 \mid 3\emptyset} + I_{3 \perp 4 \mid \emptyset} }_{P_1}
\cap
\underbrace{ I_{3 \perp 4 \mid 12} + I_{1 \perp 4 \mid 2\emptyset} + I_{2 \perp 4 \mid \emptyset} }_{P_2},
\]
\[
I_{12 \perp 34 \mid \emptyset} \subset
\underbrace{ I_{1 \perp 3 \mid 24} + I_{1 \perp 2 \mid 4\emptyset} + I_{2 \perp 4 \mid 3} + I_{2 \perp 3 \mid \emptyset} }_{Q_1}
\cap
\underbrace{ I_{1 \perp 4 \mid 23} + I_{1 \perp 2 \mid 3\emptyset} + I_{2 \perp 3 \mid 4} + I_{2 \perp 4 \mid \emptyset} }_{Q_2}.
\]

\noindent The above discussion leads to the following generalization.

\begin{proposition} \label{prop:2}
Consider an elementary relation and assume that all random variables indexed by \(n\) 
take the same number of possible values. 
If \(l = t\) and, for every \(a \in [l]\), the CI statement \(C_a\) can be obtained by permuting 
the random variables of \(D_b\) for some \(b \in [t],\) 
then the CI ideals associated with the statements on both sides of the elementary relation 
are isomorphic.
\end{proposition}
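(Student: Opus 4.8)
The plan is to realize the permutation of variable indices underlying the elementary relation as a ring automorphism of $\R[P_n]$ that carries one side of the relation onto the other. Write the relation as $\sum_{a=1}^{l} C_a = \sum_{b=1}^{t} D_b$ in $\mathcal{S}_n$, with associated CI ideals $J_C = \sum_{a=1}^{l} I_{C_a}$ and $J_D = \sum_{b=1}^{t} I_{D_b}$ in $\R[P_n]$. By the combinatorial structure of the toric binomials noted above, the families $\{C_a\}$ and $\{D_b\}$ are related by a single permutation $\gamma \in S_n$ of the variable indices; together with the hypothesis $l = t$ this yields a bijection $\pi \colon [l] \to [t]$ with $C_a = \gamma \cdot D_{\pi(a)}$ for every $a \in [l]$. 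Since all $n$ variables share the same number $d$ of states, $\gamma$ also acts on the state space $[d]^n$ by $(x_1,\dots,x_n) \mapsto (x_{\gamma^{-1}(1)},\dots,x_{\gamma^{-1}(n)})$, and relabelling the probability coordinates $p_x \mapsto p_{\gamma\cdot x}$ extends to a graded $\R$-algebra automorphism $\Phi_\gamma$ of $\R[P_n]$.

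Next I would prove the key lemma that $\Phi_\gamma$ permutes the CI ideals in the expected way, namely
\[ \Phi_\gamma\big( I_{[i \indep j \mid K]} \big) = I_{[\gamma(i) \indep \gamma(j) \mid \gamma(K)]}. \]
Recall that $I_{[i \indep j \mid K]}$ is generated by the $2 \times 2$ minors of the conditional matrices formed after marginalizing out the variables outside $\{i,j\}\cup K$ and fixing each value of $K$. Because $\gamma$ permutes all of $[n]$, applying $\Phi_\gamma$ turns a marginal over a set $S$ of coordinates into the marginal over $\gamma(S)$, sends the row index $i$ to $\gamma(i)$, the column index $j$ to $\gamma(j)$, and the conditioning values of $K$ to those of $\gamma(K)$; hence each generating minor is carried bijectively to a generating minor of the target ideal. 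Granting the lemma, applying $\Phi_\gamma$ term by term gives $\Phi_\gamma(J_D) = \sum_{b} \Phi_\gamma(I_{D_b}) = \sum_{a} I_{C_a} = J_C$, so $\Phi_\gamma$ restricts to an isomorphism $J_D \xrightarrow{\sim} J_C$ and descends to the desired isomorphism $\R[P_n]/J_D \cong \R[P_n]/J_C$.

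The hard part will be the key lemma, and in particular pinning down the role of the equal-states hypothesis. The generators of a CI ideal are minors of matrices whose row and column counts are the numbers of states of the two independent variables, and a permutation of the indices maps the generating set of one CI ideal onto that of another only when the relevant conditional matrices have matching shapes --- which is exactly what requiring every variable to have $d$ states guarantees. When the state counts differ, $\Phi_\gamma$ is not even well defined, since permuting the indices no longer preserves the coordinate index set of $\R[P_n]$; this is precisely the obstruction that the hypothesis removes. The remaining work is the bookkeeping needed to check that marginalization commutes with $\Phi_\gamma$ and that a single global $\gamma$ realizes all the pairings $C_a = \gamma\cdot D_{\pi(a)}$ simultaneously --- a structural fact already visible in the quadratic relations of Section~2 --- rather than any deep algebraic difficulty.
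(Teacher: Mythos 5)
Your proposal is correct and follows essentially the same route as the paper's proof: both arguments realize the permutation $\gamma \in S_n$ of variable indices as a relabelling $p_{r_1\dots r_n}\mapsto p_{\gamma(r_1\dots r_n)}$ of the probability coordinates that carries the generators of one CI ideal onto the other, using the equal-states hypothesis to ensure the conditional matrices have matching shapes. Your write-up is in fact more careful than the paper's, which simply asserts the correspondence of generators; your key lemma $\Phi_\gamma(I_{[i\indep j\mid K]}) = I_{[\gamma(i)\indep\gamma(j)\mid\gamma(K)]}$ and your remark that a single global $\gamma$ must realize all the pairings simultaneously make explicit exactly the steps the paper leaves implicit.
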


\begin{proof}
Suppose all random variables take the same value \(r.\)
The probability matrices corresponding to each CI statement in the relation 
are square matrices of size \(r \times r.\)
Let \(J_1, J_2 \subset R[P_n]\) be defined as
\[
J_1 = I_{[i_{l_1} \perp j_{l_1} \mid K_{l_1}]} + \cdots + I_{[i_{l_q} \perp j_{l_q} \mid K_{l_q}]},
\qquad
J_2 = I_{[i_{t_1} \perp j_{t_1} \mid K_{t_1}]} + \cdots + I_{[i_{t_r} \perp j_{t_r} \mid K_{t_r}]}.
\]
Since the number and types of CI statements are the same, the ideals \(J_1\) and \(J_2\) 
have the same number of generators. 
If the indeterminates of the generators of \(J_1\) are of the form \(p_{r_1 \dots r_n}\) 
for \(r_i \in [r],\) then those of \(J_2\) are \(p_{\gamma(r_1 \dots r_n)}\) 
for some permutation \(\gamma \in S_n.\) 
Therefore, the two ideals are isomorphic.
\end{proof}

The study in~\cite{MR2825749} showed that for elementary relations that can be extended 
to non-elementary CI relations, the collections of elementary CI statements 
on both sides of the equation are related by a permutation of the random variables. 
We observed that only the homogeneous binomials in \(Gr_{A_4}\), where all variables 
have degree one, produce CI relations of this kind. 
Using the software \textit{Singular}, we derived the following result.

\begin{proposition} \label{prop:3}
For a set of binary and mixed binary and ternary random variables indexed by \(n = 3, 4,\) 
let \(J_1, J_2,\) and \(J_3 \subset R[P_n]\) be the CI ideals that satisfy a three-sided CI relation such that
\[
J_1 = I_{[i_{l_1} \perp j_{l_1} \mid K_{l_1}]} + \cdots + I_{[i_{l_q} \perp j_{s_q} \mid K_{s_q}]}, \,
J_2 = I_{[i_{t_1} \perp j_{t_1} \mid K_{t_1}]} + \cdots + I_{[i_{t_r} \perp j_{t_r} \mid K_{t_r}]}, \text{and} \quad
J_3 = I_s.
\]
The CI ideal \(J_3\) lies in the intersection of the minimal primes of \(J_1\) and \(J_2.\) 
Moreover, the variety \(V(J_3)\) is the only set that intersects the probability simplex non-trivially.
\end{proposition}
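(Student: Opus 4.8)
The plan is to prove the two assertions separately and then combine them. Writing $\Delta^{\circ}$ for the strictly positive distributions (the relative interior of the probability simplex $\Delta$), I would first establish the global algebraic inclusions $J_1\subseteq J_3$ and $J_2\subseteq J_3$, and then, on $\Delta^{\circ}$, the reverse set-theoretic equalities $V(J_1)\cap\Delta^{\circ}=V(J_2)\cap\Delta^{\circ}=V(J_3)\cap\Delta^{\circ}$. These two facts together force $V(J_3)$ to be the unique irreducible component of $V(J_1)$ (and of $V(J_2)$) meeting $\Delta^{\circ}$, which is precisely the statement that the prime $J_3$ is a common minimal prime of $J_1$ and $J_2$ and that $V(J_3)$ is the only component reaching the interior of the simplex.

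For the global inclusions I would argue from the rank description of the ideals. The statement $s=\langle I,J\mid K\rangle$ says that, for each fixed value $c$ of $X_K$, the matrix $M_c$ with rows indexed by configurations of $X_I$ and columns by configurations of $X_J$ has rank at most one; hence $J_3$ is generated by the $2\times 2$ minors of the $M_c$ and is prime, its variety being a product of cones over Segre varieties, so irreducible and reduced. Each elementary statement $C_a$ in the first collection is obtained from $s$ by a weak-union step (which replaces $M_c$ by a submatrix) or a decomposition step (which replaces it by a matrix of marginals, i.e.\ by sums of rows or columns of $M_c$). In both cases a rank-one matrix remains of rank at most one, so every generator of $I_{C_a}$ vanishes on $V(J_3)$ and therefore lies in $\sqrt{J_3}=J_3$. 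Summing over $a$ gives $J_1\subseteq J_3$, and the same reasoning for the second collection gives $J_2\subseteq J_3$; geometrically $V(J_3)\subseteq V(J_1)\cap V(J_2)$.

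For the reverse inclusion on $\Delta^{\circ}$ I would use the defining imset identity of the three-sided relation, $u_s=\sum_{a}u_{C_a}=\sum_{b}u_{D_b}$ (obtained in the excerpt by solving $\A\cdot x=s$), and pair it with the multiinformation function $m_P$ of a distribution $P$ with finite multiinformation. Each elementary term satisfies $\langle u_{\langle A,B\mid C\rangle},m_P\rangle=-\mathrm{I}_P(A;B\mid C)$, where the conditional mutual information $\mathrm{I}_P(A;B\mid C)\ge 0$ vanishes exactly when the corresponding conditional independence holds. Linearity of the pairing then yields the chain-rule identity
\[
\mathrm{I}_P(I;J\mid K)=\sum_{a}\mathrm{I}_P(C_a)=\sum_{b}\mathrm{I}_P(D_b).
\]
Since every summand is nonnegative, for $P\in\Delta^{\circ}$ the left-hand side vanishes if and only if each $\mathrm{I}_P(C_a)$ vanishes, that is, $P\in V(J_3)$ if and only if $P\in V(J_1)$, and likewise for $J_2$. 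This gives $V(J_1)\cap\Delta^{\circ}=V(J_2)\cap\Delta^{\circ}=V(J_3)\cap\Delta^{\circ}$.

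Finally I would upgrade these facts to the component statement. Since $V(J_3)$ is irreducible and contained in $V(J_1)$, it lies in some irreducible component $W$ of $V(J_1)$, and then $W\cap\Delta^{\circ}\subseteq V(J_1)\cap\Delta^{\circ}=V(J_3)\cap\Delta^{\circ}\subseteq V(J_3)\subseteq W$, so $W\cap\Delta^{\circ}=V(J_3)\cap\Delta^{\circ}$. Using that the positive real locus of any component of a conditional independence variety that meets $\Delta^{\circ}$ is Zariski dense in that component, I conclude $W=\overline{W\cap\Delta^{\circ}}=\overline{V(J_3)\cap\Delta^{\circ}}=V(J_3)$, so $J_3$ is a minimal prime of $J_1$, and symmetrically of $J_2$; applying the same density argument to any component meeting $\Delta^{\circ}$ shows it equals $V(J_3)$, which gives uniqueness, while every remaining component meets the closed simplex only in its boundary $\partial\Delta$. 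The step I expect to be the main obstacle is exactly this last density argument, which is what allows one to pass from the pointwise equivalence on the open simplex to the scheme-theoretic conclusion that $J_3$ is an honest minimal prime rather than merely a prime containing $J_1$; for the values of $n$ and the cardinalities listed in the statement this conclusion is in any case confirmed directly by the primary decompositions computed in \textit{Singular}.
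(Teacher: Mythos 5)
Your route is genuinely different from the paper's: the paper offers no deductive argument for Proposition~\ref{prop:3} at all, but justifies it entirely by the \textit{Singular} computations reported in Tables~\ref{tab:1} and~\ref{tab:2} and the displayed containments of \(I_{123\perp 4\mid\emptyset}\) and \(I_{12\perp 34\mid\emptyset}\) in \(P_1\cap P_2\) and \(Q_1\cap Q_2\). Your first two steps are sound and actually supply structure the paper leaves implicit: the inclusion \(J_1\subseteq J_3\) (via weak union and decomposition acting on the \(2\times 2\) minors, plus primeness of the Segre ideal \(J_3\)) and the equality of \(V(J_1)\), \(V(J_2)\), \(V(J_3)\) on the open simplex (via the chain rule for conditional mutual information applied to the imset identity). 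If completed, this would prove the mechanism for arbitrary \(n\), not just the cases \(n=3,4\) to which the proposition is restricted.

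However, there is a genuine gap exactly where you flag it. The claim that any irreducible component \(W\) of \(V(J_1)\) meeting \(\Delta^{\circ}\) has Zariski-dense positive real locus is not a general fact about real varieties: a component defined over \(\mathbb{R}\) can intersect \(\Delta^{\circ}\) in a set of strictly smaller dimension (its real points may even be isolated), in which case \(\overline{W\cap\Delta^{\circ}}\subsetneq W\) and your conclusion \(W=V(J_3)\) does not follow. Without that step you only obtain that \(J_3\) is a prime containing \(J_1\) and \(J_2\) whose variety exhausts \(V(J_1)\cap\Delta^{\circ}\); neither the minimality of \(J_3\) over \(J_1\) and \(J_2\) nor the uniqueness of the component meeting the simplex is established. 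Your fallback --- confirming the finitely many cases by primary decomposition in \textit{Singular} --- is precisely the paper's own proof, but note that the paper itself reports that these decompositions did not terminate in all of the covered cases (e.g.\ the cubic and quartic relations, and the all-ternary case for \(n=3\)), so even the computational verification is not complete as stated. To close your argument honestly you would need either the explicit decompositions or a substitute for the density claim, e.g.\ a dimension count showing \(\dim W=\dim V(J_3)\) for any component \(W\supseteq V(J_3)\), or an argument special to determinantal CI varieties guaranteeing smooth positive real points on every component that meets \(\Delta^{\circ}\).
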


The following example illustrates a CI relation produced by \(Gr_{A_4}\) 
that does not correspond to any CI statement in \(E_4:\)
\[
I_{2 \perp 4 \mid 13} + I_{1 \perp 4 \mid 3} + I_{4 \perp 3 \mid \emptyset} 
= P_1 
= I_{4 \perp 3 \mid 12} + I_{1 \perp 4 \mid 2} + I_{2 \perp 4 \mid \emptyset} 
= P_2.
\]
The computation shows that \(P_1\) and \(P_2\) are isomorphic in the binary case, 
but have different degrees, dimensions, and minimal primes otherwise. 
The converse of Proposition \ref{prop:3} is not true, as demonstrated by a counterexample 
in Table~\ref{tab:2}.

\section{ Ideals of Imsetal models}
Every face of the elementary imset cone induces a unique imsetal model. 
In the case \(n = 3,\) every imsetal model can be interpreted as a probabilistic model. 
For \(n = 4,\) however, Studený showed that there exists a structural imset \(u\) such that 
\(M_u \neq M_P\) for any probability distribution \(P.\) 
By computing the face lattice of the cone of elementary imsets, one can list the 
elementary CI statements in each model, since they correspond to the generators 
of each face. 
The set of faces of dimension \(d\) produces a collection of imsetal models denoted by \(M_d,\) 
where each model \(M_d^{\alpha} \in M_d\) is generated by at least \(d\) elementary CI statements.

Assuming that the collection \(M_d\) consists of probabilistic models, 
we associate the CI statements in each \(M_d^{\alpha}\) with the CI relations 
induced by \(I_A.\) 
We also examine the CI ideals in \(R[P_n]\) corresponding to every imsetal model 
for \(n = 3,\) and analyze one representative example for \(n = 4.\)

We used the commands \texttt{F VECTOR} and \texttt{DIAGRAM $\rightarrow$ FACES}
in \textit{Polymake} to compute the face lattice of the cone for \(n = 3\) and \(n = 4.\) 
By examining the collection of elementary CI statements that generate each face, 
we observed that the set of CI statements on one side of any CI relation 
does not generate a face of the cone. 
Furthermore, the four CI statements associated with each quadratic binomial 
(corresponding to the semigraphoid axiom) generate a three-dimensional face.

For \(n = 3,\) the \(f\)-vector of the elementary imset cone is \((0, 6, 9, 5, 1).\) 
The faces of the cone form a partially ordered set (poset), as illustrated in Figure~\ref{fig:4}. 
This cone lives in \(\mathbb{R}^8\) and has dimension four. 
We label the models belonging to the set of faces \(F_d\) of dimension \(d\) by \(M_d.\) 
For example, as shown in Figure~\ref{fig:1}, the set \(F_1\) produces six models, 
each consisting of a single elementary CI statement. 
The subscript \(\alpha\) in \(M_d^{\alpha}\) indicates the position of the model \(\alpha\) 
in the Hasse diagram, counted from left to right. 
For instance, \(M_2^{4} \subset M_2\) represents the model that includes 
the statements \(1 \perp 2 \mid 3\) and \(1 \perp 2 \mid \emptyset,\) 
while \(M_3^{5}\) consists of three saturated statements. 
We denote by \(M_3^{*} = \{M_3^{2}, M_3^{3}, M_3^{4}\}\) 
the subset of models in \(M_3\) associated with the semigraphoid axioms, 
which are marked by double-bordered nodes in the diagram.

  \begin{figure}[H]
   \label{fig:1}
    \centering
    \label{fig:4}
\includegraphics[width=.8\textwidth,height=.5\textheight]{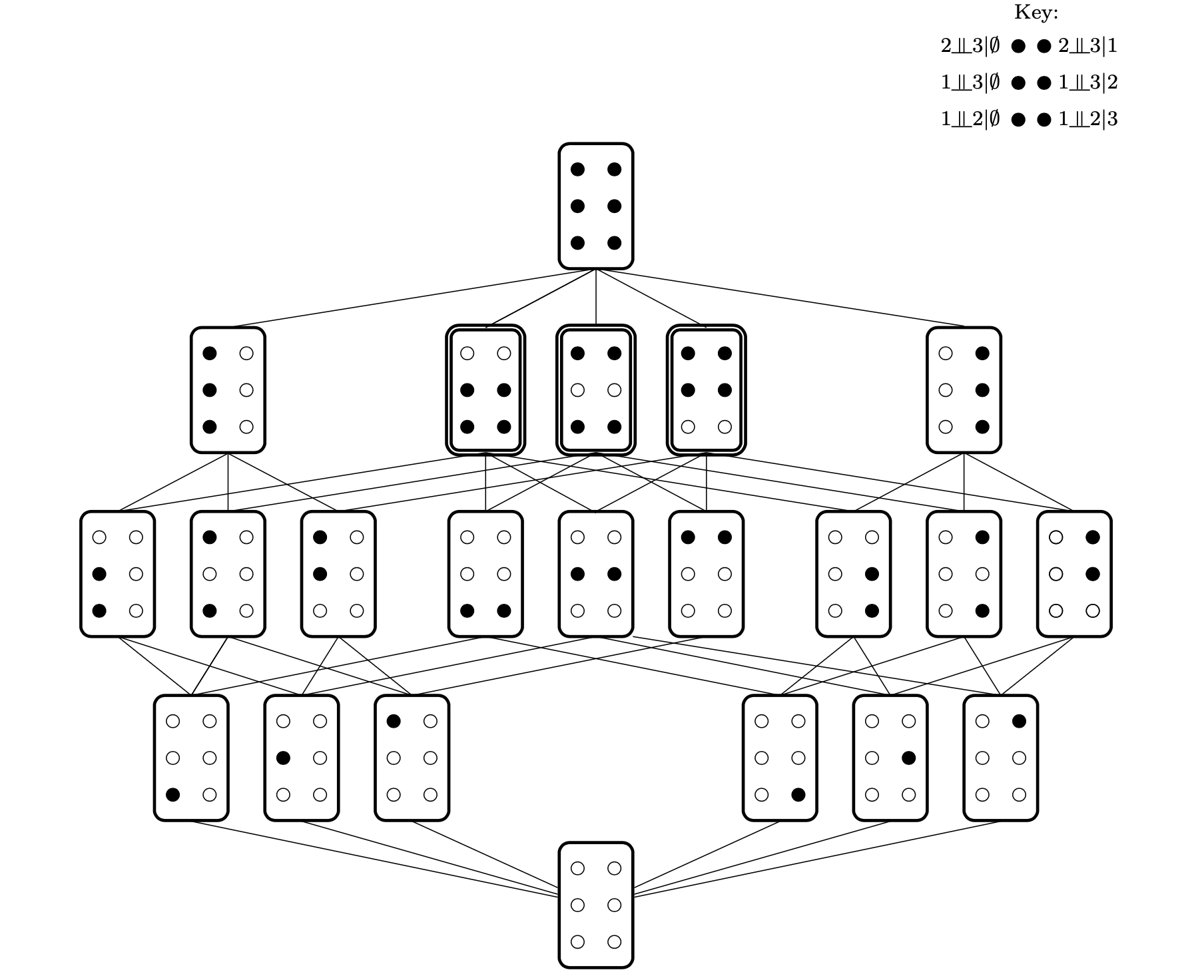}
\caption{\label{figure:figure-name1}The lattice of CI models for $n=3.$}
 \end{figure}
To reduce the computational effort required for analyzing the CI ideals \(I_{M_d},\) 
we divided the set of models \(M_d\) into equivalence classes according to the types 
of elementary CI statements contained in each model. 
The set \(M_1\) consists of two classes: one containing three marginal statements 
and the other containing three saturated statements. 
The sets \(M_2\) and \(M_3\) are each divided into three equivalence classes. 
Since CI ideals belonging to the same equivalence class are isomorphic 
by Proposition \ref{prop:2}, we selected a representative model \(M_d^{\alpha}\) 
from each class to examine the properties of its ideal \(I_{M_d^{\alpha}}.\) 
Table \ref{tab:3} presents the equivalence classes and summarizes 
the results of our computations when all random variables are binary. 
We also tested \(I_{M_d^{\alpha}}\) in mixed cases where two variables are binary 
and the third is ternary.
\vspace{.7cm}

\begin{table}[ht]
\centering
\begin{tabular}{|c|c|c|c|c|c|}
\hline
\textbf{CI Ideal} & \textbf{dim.} & \textbf{degree} & \textbf{Is Prime} & \textbf{\# Min. Primes} & \textbf{Min. Primes dim.} \\ \hline \hline
\(I_{M_{1}^{1}}\) & 7 & 2 & \small{Yes} & 1 & 7 \\ \hline
\(I_{M_{1}^{6}}\) & 6 & 4 & \small{No} & 1 & 6 \\ \hline \hline
\(I_{M_{2}^{1}}\) & 6 & 4 & \small{No} & 2 & 6, 6 \\ \hline
\(I_{M_{2}^{4}}\) & 5 & 8 & \small{No} & 2 & 5, 5 \\ \hline
\(I_{M_{2}^{9}}\) & 5 & 4 & \small{No} & 3 & 4, 4, 5 \\ \hline \hline
\(I_{M_{3}^{1}}\) & 5 & 8 & \small{No} & 4 & 5, 5, 5, 5 \\ \hline
\(I_{M_{3}^{2}}\) & 5 & 4 & \small{Yes} & 1 & 5 \\ \hline
\(I_{M_{3}^{5}}\) & 4 & 5 & \small{Yes} & 4 & 2, 2, 2, 4 \\ \hline \hline
\(I_{M_{4}}\) & 4 & 6 & \small{Yes} & 1 & 4 \\ \hline
\end{tabular}
\caption{\label{tab:3} Summary of the computation on the CI ideals of the binary models \(M_{d}^{\alpha}\) over \(n = 3.\)}
\end{table}
The following statements on imsetal models over $n=3$ are proved by computation.

\begin{enumerate}[label=(\roman*)]
\item For any model \(M_d^{\alpha}\) with \(d > 1\) that includes at least one saturated 
CI statement, the primary decomposition of its ideal \(I_{M_d^{\alpha}}\) 
contains at least one binomial component. 
These binomial components are equal to \(I_s\) for some \(s \in \mathcal{S}_n.\) 
The form and number of such components depend on how these models 
relate to the three models \(M_3^{*}\) that are generated by the four elementary 
CI statements described in Equation~\ref{eq:2} within the lattice structure. 
The following list presents the most representative examples of these models.

\begin{itemize}

\item We start with a representative of the three models in \(M_3^{*}.\)
\[
M_3^{2} = \{1 \perp 2 \mid \emptyset,\; 2 \perp 3 \mid 1,\; 2 \perp 3 \mid \emptyset,\; 1 \perp 2 \mid 3\}.
\]
The elements of this set are involved in the following non-elementary CI relation:
\[
1 \perp 2 \mid 3 + 1 \perp 3 \mid \emptyset = 1 \perp 3 \mid 2 + 1 \perp 2 \mid \emptyset = 12 \perp 3 \mid \emptyset.
\]
The computation shows that the primary decomposition of \(I_{M_3^{2}}\) includes the following binomial ideal:
\begin{align*}
I_{12 \perp 3 \mid \emptyset} 
&= \langle p_{212}p_{221} - p_{211}p_{222},\; 
p_{122}p_{212} - p_{112}p_{222},\;
p_{121}p_{212} - p_{111}p_{222}, \\
&\quad p_{122}p_{211} - p_{112}p_{221},\;
p_{121}p_{211} - p_{111}p_{221},\;
p_{112}p_{121} - p_{111}p_{122} \rangle.
\end{align*}
In the binary case, \(I_{M_3^{2}} = I_{12 \perp 3 \mid \emptyset}.\)
We used the command \texttt{gens gb I} in \textit{Macaulay2} to compute a Gr\"obner basis for \(I_{M_3^{2}}.\)
The computation confirms that the generators of \(I_{12 \perp 3 \mid \emptyset}\) form a Gr\"obner basis for \(I_{M_3^{2}}\) 
with respect to the lexicographic term order.
For other configurations of random variable values, the ideal \(I_{M_3^{2}}\) is not prime; 
the variety of each non-binomial minimal prime intersects the boundary of the probability simplex \(\Delta_{11}.\)

\item In the binary case, the full model 
\(M_4 = M_3^{2} \cup M_3^{3} \cup M_3^{4}\) 
is a toric ideal~\cite{drton2008lectures}. 
This ideal is exactly the binomial ideal
\[
I_{\mathfrak{E}} = I_{13 \perp 2 \mid \emptyset} + I_{12 \perp 3 \mid \emptyset} + I_{23 \perp 1 \mid \emptyset}.
\]
Moreover, the binomials of \(I_{\mathfrak{E}}\) generate a Gr\"obner basis for \(I_{M_4}\) 
with respect to the lexicographic term order.
When one of the random variables is ternary, the ideal \(I_{\mathfrak{E}}\) 
remains the only component in the primary decomposition of \(I_{M_4}\) 
whose variety satisfies \(\mathcal{V}(I_{\mathfrak{E}}) \subset \Delta_{11} \setminus \{0\}.\)

\item An interesting observation is that \(I_{M_3^{5}} \subset I_{M_4}\) 
for the model \(M_3^{5}\), which consists of the three elementary saturated CI statements 
that contain \(I_{\mathfrak{E}}\) in their primary decomposition.

\item Regarding the models \(M_2^{\alpha}\) for \(\alpha > 3:\)
The type of binomial ideal(s) in the primary decomposition of \(I_{M_2^{\alpha}}\) 
depends on the model \(M_3^{\beta} \in M_3^{*}\) such that \(M_2^{\alpha} \subset M_3^{\beta}.\)
For example, consider the following models:
\begin{align*}
M_2^{4} &= \{1 \perp 2 \mid 3,\; 1 \perp 2 \mid \emptyset\}, \\
M_2^{6} &= \{1 \perp 2 \mid 3,\; 1 \perp 3 \mid 2\}.
\end{align*}

The model \(M_2^{4}\) is a submodel of both \(M_3^{2}\) and \(M_3^{3}.\)
The computation shows that the binomial ideals \(I_{12 \perp 3 \mid \emptyset}\) 
and \(I_{23 \perp 1 \mid \emptyset}\) appear in the primary decomposition of \(I_{M_2^{4}}.\)
Since the model \(M_2^{6}\) is a submodel of \(M_3^{2}\) only, 
and following Sturmfels~\cite{eisenbud1996binomial}, 
we expected \(I_{M_2^{6}}\) to decompose into more binomial components, 
including \(I_{12 \perp 3 \mid \emptyset}.\)
However, \(I_{12 \perp 3 \mid \emptyset}\) was the only binomial component 
that appeared in the primary decomposition.
This observation supports the idea that the algebraic and combinatorial 
representation of CI statements through imsets 
can effectively identify the minimal primes of the associated CI ideals.
For CI inferences of these models, see \S6.7 in~\cite{drton2008lectures}.
\end{itemize}

\item The ideals of the models that consist only of marginal statements, 
such as \(I_{M_2^{1}}, I_{M_2^{2}}, I_{M_2^{3}},\) and \(I_{M_3^{1}},\) 
are complete intersections. 
Theorem $4.3.5$ in \cite{sullivant2018algebraic} provides a detailed description 
of the varieties associated with these ideals.

\end{enumerate}

In the case $n=4,$ the total number of faces is $22108.$ The following is the $f$-vector of the elementary imset cone associated with $\A_4:$
  $$(1, 24, 228, 1128, 3212, 5560, 5980, 3985, 1596, 356, 37, 1).$$
   It was shown that not every imsetal model can be considered a probabilistic model. In particular, we have more initial models than probabilistic models. Studen\'{y} called these models \textit{facial models}. He showed by the counterexample, Example $4.1$ in \cite{MR3183760}, that there exists a structural imset $u$ such that $\M_u\neq \M_P$  for any probability distribution $P.$
We computed the CI ideals of the models associated with semigraphoids found by the Markov basis 
in Section~\ref{s:3} and their submodels, considering two combinations of values for the random variables: 
all binary, and when the second variable is ternary. 
In the binary case, due to the ideal isomorphism proved in Proposition~\ref{prop:2}, 
we selected the following two models to represent Class~I:
\[
M_3^{1} = \{ 2 \perp 4 \mid 1,\; 2 \perp 3 \mid 14,\; 2 \perp 3 \mid 1,\; 2 \perp 4 \mid 13 \},
\]
\[
M_3^{2} = \{ 2 \perp 4 \mid 1,\; 3 \perp 4 \mid 12,\; 3 \perp 4 \mid 1,\; 2 \perp 4 \mid 13 \}.
\]

Note that \(M_3^{1}\) is generated by the elementary CI statements appearing 
in the three-sided relations of Equation~\ref{eq:6}. 
Figure~\ref{fig:1} illustrates the nodes of these models and their submodels.
\begin{figure}[ht]
    \centering
    \includegraphics[width=.6\textwidth,height=.3\textheight]{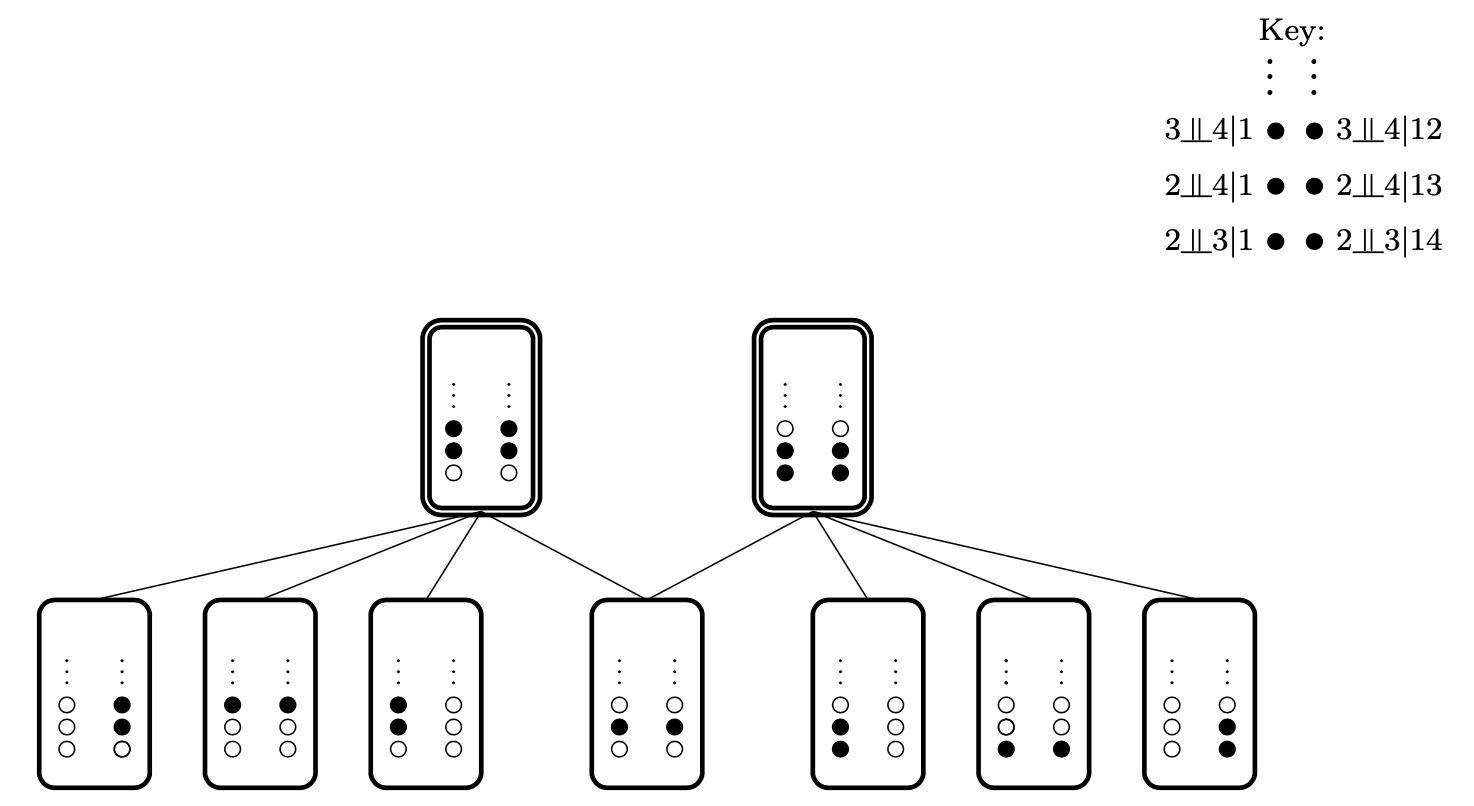}
    \caption{\label{figure:figure-name2} Sub-lattice of CI models for \(n = 4.\)}
\end{figure}

We obtained results similar to those for \(n = 3.\) 
A summary of the most significant findings is provided below.

\begin{enumerate}[label=(\roman*)]

\item In the binary case, the following equalities are verified by computation:
\[
I_{M_3^{1}} = I_{34 \perp 2 \mid 1},
\qquad
I_{M_3^{2}} = I_{32 \perp 4 \mid 1}.
\]

\item The prime decomposition of every submodel \(I_{M_2^{\alpha}}\) 
that includes at least one saturated CI statement contains binomial components, 
specifically \(I_{32 \perp 4 \mid 1}\) and \(I_{34 \perp 2 \mid 1}.\) 
This decomposition depends on the poset structure, 
particularly on the connection of each submodel to \(M_3^{2}\) and \(M_3^{3}.\) 
For example, the prime decomposition of \(I_{M_2^{4}}\) 
consists of four binomials, two of which are \(I_{32 \perp 4 \mid 1}\) 
and \(I_{34 \perp 2 \mid 1}.\) 
However, \(I_{32 \perp 4 \mid 1}\) was the only binomial 
in the decomposition of \(I_{M_2^{7}},\) 
even though that ideal itself is binomial.

\item We also examined the following models, generated by the elementary 
CI statements of Class~I, which yield the three-sided CI relation 
described in Equation~\ref{eq:7}:
\[
M_3^{3} = \{ 1 \perp 2 \mid \emptyset,\; 2 \perp 4 \mid 1,\; 
2 \perp 4 \mid \emptyset,\; 1 \perp 2 \mid 4 \},
\]
\[
M_3^{4} = \{ 1 \perp 2 \mid \emptyset,\; 1 \perp 4 \mid 2,\; 
1 \perp 4 \mid \emptyset,\; 1 \perp 2 \mid 4 \}.
\]
The ideals \(I_{14 \perp 2 \mid \emptyset}\) and \(I_{24 \perp 1 \mid \emptyset}\) 
appear in the primary decompositions of \(I_{M_3^{3}}\) and \(I_{M_3^{4}},\) respectively. 
These same binomial components also occur in the decompositions of every submodel 
of \(M_3^{3}\) and \(M_3^{4}\) that is not generated solely by marginal CI statements.
\end{enumerate}

We aim to extend Proposition~\ref{prop:2} and prove the following conjecture for any given \(n.\)

\begin{conjecture}
Let \(M_d^{\alpha}\) denote the models induced by the faces of the elementary imset cone over \(n.\)
For every inclusion \(M_d^{\alpha} \subset M_{d+1}^{\beta},\) 
where \(M_{d+1}^{\beta}\) is generated by the elementary CI statements 
that form a non-elementary CI relation, 
the primary decomposition of \(I_{M_{d+1}^{\beta}}\) 
and of every \(I_{M_d^{\alpha}}\) that either contains a saturated CI statement 
or is not generated solely by marginal ones, 
is equal to \(I_E.\)
\end{conjecture}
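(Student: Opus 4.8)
The plan is to identify the ideal $I_E$ appearing in the conjecture with the toric (determinantal) ideal $I_s$ of the saturated statement $s$ to which the non-elementary relation collapses, and then to prove that $I_s$ occurs as the unique minimal prime of each model ideal in question whose variety meets the interior $\Delta^{\circ}$ of the probability simplex. Recall from Section~\ref{s:3} that $I_s$ is generated by the $2\times 2$ minors of the probability matrix attached to $s$, hence is prime with $V(I_s)$ a Segre variety. Accordingly I read the phrase ``the primary decomposition is equal to $I_E$'' as the statement that $I_s$ is a minimal prime of $I_{M_{d+1}^{\beta}}$ and of every qualifying $I_{M_d^{\alpha}}$, and that it is the only associated prime whose variety intersects $\Delta^{\circ}$ nontrivially.

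First I would establish the containment $\sqrt{I_M}\subseteq I_s$ for every model $M$ under consideration. By hypothesis $M$ is generated by the elementary statements occurring in the relation $s = C_1+\cdots+C_q = D_1+\cdots+D_r$. Each such elementary statement arises from $s$ by a single application of the decomposition or weak-union semigraphoid axiom; for instance, in the relation of Proposition~\ref{prop:1} the statement $i\indep jl\mid K$ yields both $i\indep l\mid K$ and $i\indep j\mid K\cup l$. Hence every distribution satisfying $s$ satisfies each generating statement of $M$, so $V(I_s)\subseteq V(I_C)$ for every generator $C$, and therefore $V(I_s)\subseteq V(I_M)$. Passing to ideals and using that $I_s$ is radical gives $\sqrt{I_M}\subseteq I_s$, so that $I_s$ is a prime containing $I_M$.

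Next I would show that $V(I_s)$ is an irreducible component of $V(I_M)$. It is irreducible and meets $\Delta^{\circ}$, since the uniform distribution satisfies every CI statement, so it suffices to rule out a strictly larger irreducible subvariety of $V(I_M)$ that meets $\Delta^{\circ}$ and contains it. For the full model $M_{d+1}^{\beta}$, whose ideal is the sum $J_1+J_2$ of the two sides of the relation, this follows from Proposition~\ref{prop:3}: since $V(I_s)$ is already the unique component of each of $V(J_1)$ and $V(J_2)$ meeting $\Delta^{\circ}$, it is a fortiori the unique such component of $V(J_1+J_2)=V(J_1)\cap V(J_2)$, giving $V(I_M)\cap\Delta^{\circ}=V(I_s)\cap\Delta^{\circ}$. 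For a submodel $M_d^{\alpha}\subset M_{d+1}^{\beta}$ I would argue inductively along the face lattice: deleting a generator enlarges the variety only by components lying in the boundary $\partial\Delta$, so the component through $\Delta^{\circ}$ persists. The combinatorial hypothesis---that $M_d^{\alpha}$ either contains a saturated statement or is not generated solely by marginal ones---is precisely the condition ensuring the surviving positive-region component remains $V(I_s)$; models built only from marginal statements instead yield complete intersections (cf.\ Theorem~4.3.5 in~\cite{sullivant2018algebraic}) carrying no such binomial component.

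The main obstacle is controlling the full primary decomposition for arbitrary $n$: although the two inclusions above pin down the unique component meeting $\Delta^{\circ}$, certifying that $I_s$ is a genuine \emph{minimal} prime---neither embedded nor absorbed into a larger positive component---requires a dimension or tangent-space computation at a generic point of $V(I_s)\cap\Delta^{\circ}$, together with a proof that the boundary components stay disjoint from the interior. I expect the cleanest route combines Proposition~\ref{prop:2}, which gives the isomorphism of the two sides in the equal-cardinality case, with an induction on $d$ that tracks, face by face, how each generator removed from $M_{d+1}^{\beta}$ contributes only boundary components. Making the combinatorial hypothesis interact correctly with this induction---so that the marginal-only submodels are correctly excluded while every other submodel retains the binomial component---is the delicate point of the argument.
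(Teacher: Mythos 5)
The statement you are proving is stated in the paper as a \emph{conjecture}: the paper offers no proof, only computational evidence for $n=3$ and $n=4$ (Tables~\ref{tab:3} and the $n=4$ observations) and an explicit remark that proving it is future work. So there is no argument of the paper's to compare yours against; what you have written is a strategy for an open problem, and it should be judged as such.

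As a strategy it is sensible, but several steps are genuine gaps rather than routine details. First, your containment $V(I_s)\subseteq V(I_C)$ is deduced from the fact that every \emph{distribution} satisfying $s$ satisfies each generator $C$ via decomposition/weak union; that is a statement about points of the probability simplex, whereas $V(I_s)$ is the full variety, including points with zero, negative, or complex coordinates where semigraphoid implications are not automatic. You would need the ideal-theoretic containment $I_C\subseteq I_s$ (or at least $I_C \subseteq \sqrt{I_s}$) directly, e.g.\ by expressing the $2\times 2$ minors generating $I_C$ in terms of those generating $I_s$; this is plausible for decomposition/weak union but is not established by the probabilistic argument. Second, you invoke Proposition~\ref{prop:3} as the base case, but that proposition is itself only a computational observation for $n=3,4$ (``Using the software \textit{Singular}, we derived the following result''), so resting a general-$n$ argument on it is circular. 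Third, the inductive step ``deleting a generator enlarges the variety only by components lying in the boundary'' is precisely the content of the conjecture and is asserted, not proved; the paper's own data show this step is delicate (the ideal $I_{M_2^{6}}$ had fewer binomial components than the lattice position predicted). Finally, you correctly flag that minimality of $I_s$ as an associated prime is unresolved. In short: your interpretation of the (ambiguously worded) conclusion as ``$I_s$ is the unique minimal prime whose variety meets the open simplex'' is a reasonable reading, and your two-inclusion outline is the natural attack, but the proposal does not close the conjecture and should not be presented as a proof.
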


 \section{Conclusion}

In conclusion, this study establishes a clear correspondence between the algebraic, 
combinatorial, and geometric structures of conditional independence (CI) statements. 
By analyzing the primary decomposition of CI ideals and their relationships to imsetal models, 
we demonstrated that certain binomial components, particularly those equal to \(I_E\), serve 
as fundamental building blocks for understanding non-elementary CI relations. 
The computational evidence for \(n = 3\) and \(n = 4\) supports the conjecture that 
the same structural behavior extends to higher dimensions.

Our results highlight that imsets and their associated cones provide an effective framework 
for identifying minimal primes and understanding the hierarchy of CI models across dimensions. 
Moreover, the observed correspondence between the faces of the elementary imset cone 
and the algebraic properties of CI ideals reinforces the geometric interpretation 
of independence relations.

Future work will aim to generalize Proposition~\ref{prop:2} and formally prove the conjecture 
for arbitrary \(n.\) 
We also plan to investigate computational techniques that exploit the combinatorial structure 
of imsets to improve efficiency in primary decomposition and model classification, 
extending these methods to mixed discrete systems and higher-dimensional settings.

\section*{Acknowledgments}
We express our gratitude to Dr.~Bernd Sturmfels for the opportunity to join his class 
based on his book \textit{Gröbner Bases and Convex Polytopes}. 
His insights and guidance have been invaluable to this research, 
as has his pioneering work in nonlinear algebra. 
Thank you, Dr.~Sturmfels.
\bibliographystyle{unsrt}
\bibliography{references} % see references.bib for bibliography management

\newpage

\section*{ Appendix : Examples of CI Relations for n=4.} \label{apx:1}

$$ \textbf{Class I Quadratic CI Relation: Each connected to Type III statement in}\, \mathfrak{E_4}$$
\begin{scriptsize}
$$[	1 \indep 2| \emptyset + 2 \indep 3|1]\,\,\, = \,\,\, [2 \indep 3| \emptyset + 1 \indep 2|3] \,\,\, =\,\,\, [13 \indep 2 |\emptyset] \hspace{2cm}[1 \indep 3| \emptyset + 1 \indep 2|3] \,\,\, = \,\,\, [1 \indep 2| \emptyset + 1 \indep 3|2] \,\,\, =\,\,\, [23 \indep 1 |\emptyset] $$
$$	[1 \indep 3| \emptyset + 2 \indep 3|1] \,\,\, = \,\,\, [2 \indep 3| \emptyset + 1 \indep 3|2] \,\,\, =\,\,\, [12 \indep 3 |\emptyset] \hspace{2cm} [1 \indep 2| \emptyset + 2 \indep 4|1] \,\,\, = \,\,\, [2 \indep 4| \emptyset + 1 \indep 2|4] \,\,\, =\,\,\, [14 \indep 2 |\emptyset] $$
$$	[1 \indep 2| \emptyset + 1 \indep 4|2] \,\,\, = \,\,\, [1 \indep 4| \emptyset + 1 \indep 2|4 ] \,\,\, =\,\,\, [24 \indep 1 |\emptyset] \hspace{2cm} [1 \indep 4| \emptyset + 2 \indep 4|1] \,\,\, = \,\,\,[ 2 \indep 4| \emptyset + 1 \indep 4|2] \,\,\, =\,\,\, [12 \indep 4 |\emptyset]$$
$$	[1 \indep 3| \emptyset + 1 \indep 4|3] \,\,\, = \,\,\,[ 1 \indep 4| \emptyset + 1 \indep 3|4] \,\,\, =\,\,\, [34 \indep 1 |\emptyset] \hspace{2cm} [3 \indep 4| \emptyset + 1 \indep 3|4] \,\,\, = \,\,\, [1 \indep 3| \emptyset + 3 \indep 4|1] \,\,\, =\,\,\, [14 \indep 3 |\emptyset] $$
$$	[3 \indep 4| \emptyset + 1 \indep 4|3] \,\,\, = \,\,\, [1 \indep 4| \emptyset + 3 \indep 4|1] \,\,\, =\,\,\, [13 \indep 4 |\emptyset] \hspace{2cm} [2 \indep 3| \emptyset + 3 \indep 4|2] \,\,\, = \,\,\, [3 \indep 4| \emptyset + 2 \indep 3|4] \,\,\, =\,\,\, [24 \indep 3 |\emptyset] $$
$$	[2 \indep 4| \emptyset + 2 \indep 3|4] \,\,\, = \,\,\, [2 \indep 3| \emptyset + 2 \indep 4|3] \,\,\, =\,\,\, [34 \indep 2 |\emptyset]\hspace{2cm} [3 \indep 4| \emptyset + 2 \indep 4|3] \,\,\, = \,\,\, [2 \indep 4| \emptyset + 3 \indep 4|2]\,\,\, =\,\,\, [23 \indep 4 |\emptyset] $$
\end{scriptsize}
$$ \textbf{Class II Quadratic CI Relation: Each connected to Type IV statement in}, \mathfrak{E_4}$$
\begin{scriptsize}
$$[	3 \indep 4|1 + 2 \indep 3|14 ]\,\,\, = \,\,\,[ 2 \indep 3|1 + 3 \indep 4|12] \,\,\, =\,\,\, [24 \indep 3 |1] \hspace{1.7cm} [2 \indep 4|1 + 2 \indep 3|14] \,\,\, = \,\,\,[ 2 \indep 3|1 + 2 \indep 4|13 ] \,\,\, =\,\,\, [34 \indep 2 |1]$$
$$[	2 \indep 4|1 + 3 \indep 4|12] \,\,\, = \,\,\, [3 \indep 4|1 + 2 \indep 4|13] \,\,\, = \,\,\, [23 \indep 4 |1] \hspace{1.7cm} [1 \indep 3|2 + 3 \indep 4|12] \,\,\, = \,\,\, [3 \indep 4|2 + 1 \indep 3|24] \,\,\, = \,\,\, [14 \indep 3 |2] $$
$$	[1 \indep 3|2 + 1 \indep 4|23] \,\,\, = \,\,\,[ 1 \indep 4|2 + 1 \indep 3|24] \,\,\, = \,\,\, [34 \indep 1 |2] \hspace{1.7cm} [3 \indep 4|2 + 1 \indep 4|23] \,\,\, = \,\,\, [1 \indep 4|2 + 3 \indep 4|12] \,\,\, = \,\,\, [13 \indep 4 |2]$$
$$	[1 \indep 2|3 + 1 \indep 4|23]\,\,\, = \,\,\, [1 \indep 4|3 + 1 \indep 2|34] \,\,\, = \,\,\, [24 \indep 1 |3] \hspace{1.7cm} [1 \indep 4|3 + 2 \indep 4|13] \,\,\, = \,\,\, [2 \indep 4|3 + 1 \indep 4|23] \,\,\, = \,\,\, [12 \indep 4 |3]$$
$$	[1 \indep 2|3 + 2 \indep 4|13] \,\,\, = \,\,\, [2 \indep 4|3 + 1 \indep 2|34] \,\,\, = \,\,\, [14 \indep 3 |2] \hspace{1.7cm} [1 \indep 3|4 + 2 \indep 3|14] \,\,\, = \,\,\,[ 2 \indep 3|4 + 1 \indep 3|24 ] \,\,\, = \,\,\, [12 \indep 3 |4]$$
$$	[1 \indep 2|4 + 1 \indep 3|24 ] \,\,\, = \,\,\, [ 1 \indep 3|4 + 1 \indep 2|34 ] \,\,\, = \,\,\, [23 \indep 1 |4] \hspace{1.7 cm} [ 1 \indep 2|4 + 2 \indep 3|14 ]\,\,\, = \,\,\, [ 2 \indep 3|4 + 1 \indep 2|34] \,\,\, = \,\,\, [13 \indep 2 |4]$$
\end{scriptsize}

$$ \textbf{Cubic CI Relations: Each is connected to the Type I statement in}\, \mathfrak{E_4}$$

\begin{scriptsize}
$$	[2 \indep 4|13 + 1 \indep 4|3 + 4 \indep 3|\emptyset] \,\,\, = \,\,\, [ 3 \indep 4|12 + 1 \indep 4|2 +2 \indep 4|\emptyset] \,\,\, = \,\,\, [123 \indep 4 |\emptyset] $$
$$ [2 \indep 3|14 + 1 \indep 3|4 + 4 \indep 3|\emptyset] \,\,\, = \,\,\, [ 4 \indep 3|12 + 1 \indep 3|2 +2 \indep 3|\emptyset] \,\,\, = \,\,\, [124 \indep 3 |\emptyset]$$
$$	[1 \indep 2|34 + 1 \indep 3|4 + 1 \indep 4|\emptyset] \,\,\, = \,\,\, [ 2 \indep 3|14 + 1 \indep 3|4 +3 \indep 4|\emptyset] \,\,\, = \,\,\, [123 \indep 4 |\emptyset] $$
$$ [2 \indep 3|14 + 1 \indep 2|4 + 4 \indep 2|\emptyset] \,\,\, = \,\,\, [ 2\indep 3|14 + 1 \indep 2|4 +4 \indep 2|\emptyset] \,\,\, = \,\,\, [134 \indep 4 |\emptyset]$$
\end{scriptsize}

$$ \textbf{Quartic CI Relations: Each is connected to the Type II statement in}\, \mathfrak{E_4}$$

\begin{scriptsize}
$$	[1 \indep 3|24 + 1 \indep 4|2 + 2 \indep 4|3 + 2 \indep 3|\emptyset] \,\,\, = \,\,\, [ 1 \indep 4|23 + 1 \indep 3|2 +2 \indep 3|4 +2 \indep 4|\emptyset] \,\,\, = \,\,\, [12 \indep 34 |\emptyset] $$
$$	[1 \indep 3|24 + 1 \indep 4|2 + 2 \indep 4|3 + 2 \indep 3|\emptyset] \,\,\, = \,\,\, [ 1 \indep 4|23 + 1 \indep 3|2 +2 \indep 3|4 +2 \indep 4|\emptyset] \,\,\, = \,\,\, [12 \indep 34 |\emptyset] $$
$$	[1 \indep 2|34 + 1 \indep 3|4 + 3 \indep 4|2 + 2 \indep 4|\emptyset] \,\,\, = \,\,\, [ 2 \indep 4|13 + 1 \indep 3|2 +3 \indep 4|1 +1 \indep 2|\emptyset] \,\,\, = \,\,\, [14 \indep 23 |\emptyset] $$
$$	[2 \indep 3|14 + 1 \indep 2|4 + 1\indep 4|3 + 3 \indep 4|\emptyset] \,\,\, = \,\,\, [ 3 \indep 4|12 + 1 \indep 2|3 +1 \indep 4|2 +3 \indep 2|\emptyset] \,\,\, = \,\,\, [14 \indep 23 |\emptyset] $$
\end{scriptsize}

\end{document}